\documentclass[11pt]{article}
\usepackage[T1]{fontenc}
\usepackage[font=small,labelfont=bf,tableposition=top]{caption}

\usepackage[psamsfonts]{amssymb}
\usepackage{amsmath,amsfonts}
\usepackage{amsthm}
\usepackage{graphicx}
\usepackage{bbm}

\usepackage[T1]{fontenc}

\DeclareCaptionLabelFormat{andtable}{#1~#2  \&  \tablename~\thetable}

\newtheorem{Theorem}{Theorem}
\newtheorem{Lemma}{Lemma}
\newtheorem{Corollary}{Corollary}

\newmuskip\pFqskip
\pFqskip=6mu
\mathchardef\pFcomma=\mathcode`, 

\newcommand*\pFq[5]{%
  \begingroup
  \begingroup\lccode`~=`,
    \lowercase{\endgroup\def~}{\pFcomma\mkern\pFqskip}%
  \mathcode`,=\string"8000
  {}_{#1}F_{#2}\biggl[\genfrac..{0pt}{}{#3}{#4};#5\biggr]%
  \endgroup
}

\begin{document}
\title{The higher order asymptotic expansion of the Krawtchouk polynomials\footnote{Supported by RFBR (grant 14-01-00373).}}
\author{Aleksei Minabutdinov\thanks{National Research University Higher School of Economics (HSE), Department of Applied Mathematics and Business Informatics, 3A Kantemirovskaya ul.,  St.Petersburg, 194100,  Russia }}
\maketitle
\begin{abstract}
The paper extends the classical result on the convergence of the Krawtchouk polynomials to the  Hermite polynomials. We provide the uniform asymptotic expansion in terms of the Hermite polynomials. We explicitly obtain expressions for a few initial terms of this expansion.
The research is motivated by the study of  ergodic sums of the Pascal adic transformation.
\end{abstract}
{\bf Key words:}  Krawtchouk polynomials, asymptotic expansions

\emph{MSC:}  33C45, 41A58

\section{Introduction}
Let $0<p<1$, $q=1-p$ and let $N$ be a positive integer. The non-normalized Krawtchouk polynomials of variable $x$  can be defined by the identity
\begin{equation}K_{n}(x,p, N) = \pFq{2}{1}{-x, -n}{-N}{\frac{1}{p}},\end{equation}
where $x$ and $n$ are in $\{ 0,1,\dots N\}$  and $_2F_1$ is the Gauss hypergeometric function.
The normalized Krawtchouk polynomials are usually defined as follows:
\begin{equation}\label{eq:KrawtchoukDef}k_n^{(p)}(x,N)=(-p)^n\binom{N}{n}K_{n}(x,p, N).  \end{equation}
The second argument $N$ is usually omitted, so we write  $k_n^{(p)}(x)$ instead of $k_n^{(p)}(x,N)$.
They form an orthogonal system on the discrete set $\{0, 1, 2, ..., N\}$ with
weight function
\[\rho(x) = \frac{N!\,p^xq^{N-x}}{\Gamma(1+x)\Gamma(N+1-x)}\]
and  orthogonality relation
\[\sum\limits_{x=0}^{N}k_i^{(p)}(x) k_j^{(p)}(x)\rho(x) = \binom{N}{j}(pq)^j\delta_{ij}, \quad i,j = 0,1\dots, N.\]

Krawtchouk polynomials satisfy the following Rodrigues-type formula (see \cite{NikiforovSuslovUvarov}, Section 2, (22a))

\begin{equation}
\label{eq:RodriguesTypeForKrawthcouk}
k_n^{(p)}(x) = \frac{(-q)^n}{n!}\frac{\Delta^n\big(\rho(x)x^{\underline{n}}\big)}{\rho(x)},
\end{equation}
where $\Delta f(x) = f(x+1)-f(x)$ and $y^{\underline{k}} = y(y-1)\dots(y-k+1)$. Formula \eqref{eq:RodriguesTypeForKrawthcouk} is also often taken as the definition of the Krawtchouk polynomials, see e.g.~\cite{Sharapudinov}. Expression \eqref{eq:RodriguesTypeForKrawthcouk} also shows that the Krawtchouk polynomial $k_n^{(p)}(\cdot)$ can be considered as an analytic function on $[0,N].$

Finally, it was recently found in \cite{LodkinMinabutdinov} that the  Krawtchouk polynomial $(-2p)^nK_{n}(k,p, N)$ has a natural interpretation as the ergodic sum along tower $\tau_{N,k}$ of the Pascal adic transformation  for the  \emph{orthogonalized\footnote{with respect to the Bernoulli $(p,q)$ measure}  Walsh-Paley function} $w_t^q$ where the sum of binary digits  in the binary representation of  $t\in\mathbb{N}$   equals to $n$ (see \cite{LodkinMinabutdinov} for details).


There is a classical result on convergence of the (properly renormalized) Krawtchouk polynomials to the Hermite polynomials (see \cite{Krawtchouk1929}):
\begin{equation}
\label{eq:classicalApproximOfKrawByHermite}
\lim\limits_{N\rightarrow\infty}\Big(\frac{2}{Npq}\Big)^{n/2}n!\,k_n^{(p)}(\hat{x}) =H_n(x), \end{equation}
with $\hat{x} = Np+(2Npq)^{1/2}x$ and $H_n(x) = (-1)^ne^{x^2}\frac{d^n}{dx^n}e^{-x^2}$.
The result was extended by Sharapudinov in \cite{Sharapudinov}, where he obtained the asymptotic formula
\begin{equation}
\label{eq:resultSharapudinov}
\begin{gathered}
  (2Npq \pi n!)^{1/2}(Npq)^{-n/2}\rho(\hat{x})e^{x^2/2}k_n^{(p)}(\hat{x}) = \\ \quad\quad \quad\quad  \quad\quad\quad\quad \quad\quad \quad=e^{-x^2/2}(2^nn!)^{-1/2}H_n(x)+O(n^{7/4}N^{-1/2}),
\end{gathered}
\end{equation}
with $\hat{x} = Np+(2Npq)^{1/2}x,\  n=O(N^{1/3}), \ x = O(n^{1/2}).$

Let $A$ be a positive real.
In this paper we are interested in the higher order uniform in $v\in[-A\sqrt{N},A{\sqrt{N}}]$ asymptotic expansion of the Krawtchouk polynomials of the form:
\begin{equation} \label{eq:aympExpansion}
k_n^{(p)}(\hat{x}) =\sum_{j=0}^{M}c_{j+1}(v)N^{[n/2]-j}+o(N^{[n/2]-M}),\end{equation}
where $\hat{x} = Np+v,\  n=O(1), \ M\in\mathbb{N}\cup\{0\}$ and $[t]$ gives the integer part of a real number $t$. Our main result is the uniform asymptotic expansion of $\rho(\hat{x}) k_n^{(p)}(\hat{x})$ in terms of the Hermite polynomials is stated by Theorem~\ref{Th:asympInTermsOfHermite}. Our approach is based on the result by V.~V.~Petrov (\cite{Petrov}) extending the Local Limit Theorem (LLT).

We are especially interested in the expression for the first non-constant term $c_j(v)$ in expansion \eqref{eq:aympExpansion}, due to the study of ergodic sums of the Pascal adic transformation in \cite{LodkinMinabutdinov}. It turns out that the value of $j$ depends on the parity of index $n$. In Corollary \ref{Cor:asympSimplified} under the additional assumption $v = o({N}^{1/3})$ we obtain elegant explicit expressions for $c_1(v)$ and $c_2(v)$ for odd and even values of $n$ correspondingly.

Recently, there is a considerable interest in the asymptotics of the  Kraw-tchouk polynomials, when the parameter  $N$ grows to infinity (see e.g. \cite{DaiWong} and references therein).  In paper \cite{DaiWong}  authors considered (among many others) the case when $x=O(1)$ while $N\rightarrow \infty.$ Nevertheless they ruled out the case $n\approx Np$.  We mention, that the self-duality relation $K_{x}(n,p, N)= K_{n}(x,p, N)$  implies that the one-term asymptotic expansion follows already from \eqref{eq:classicalApproximOfKrawByHermite} in this case.

The author is grateful to Profs. A. A. Lodkin and A. M. Vershik for their advices and kind support.



\section{Main results}
Using identity $\Delta^sx^{\underline{n}}= n^sx^{\underline{n-s}}$ we  rewrite  Rodrigues  formula \eqref{eq:RodriguesTypeForKrawthcouk} as follows
\begin{equation}\label{eq:RodriguesRewrite}
\begin{gathered}
\rho(x)k_n^{(p)}(x) = \frac{(-q)^n}{n!} \Delta^n\big[\rho(x)x^{\underline{n}}\big] =  \frac{(-q)^n}{n!}\sum\limits_{k=0}^n\binom{n}{k}\Delta^{n-k}\rho(x)\,\Delta^k(x+n-k)^{\underline{n}} = \\
=  \frac{(-q)^n}{n!} \sum\limits_{k=0}^n\binom{n}{k}n^{\underline{k}}\,(x+n-k)^{^{\underline{(n-k)}}}
\Delta^{n-k}\rho(x).
\end{gathered}
\end{equation}
Next we consider the term $\Delta^{s}\rho(x), \ s\geq0,$ separately. We introduce $h$-step forward difference operator   $\Delta_h f(x) =f(x+h)-f(x),\,\ \Delta_h^nf(x) = \Delta_h(\Delta_h^{n-1}f(x)), n\geq2.$ Following \cite{Sharapudinov} we let $h$ to be equal to $\frac{1}{\sqrt{2Npq}}$, thus we can write\footnote{Note that $\Delta$ is  forward difference in $\hat{x}$, while $\Delta_h$ is $h$-step forward difference in $x$.}
\begin{equation}\label{eq:Delta_h_rewrite}\Delta^{s}\rho( \hat{x}) =\Delta^{s}_h\, \rho(\hat{x}(x)),\end{equation}
where $\hat{x}=Np+(2Npq)^{1/2}x$.
In fact, asymptotic relation \eqref{eq:classicalApproximOfKrawByHermite}, as well as \eqref{eq:resultSharapudinov}, follows\footnote{Other approaches are based on the convergence of a the  difference equation, which polynomial solutions define the Krawtchouk polynomials, to the differential equation defining Hermite polynomials, see details  in e.g. \cite{NikiforovSuslovUvarov}, or on the  convergence  of generating functions, see \cite{Szego}.} from the LLT applied to $\rho(\hat{x})$ and the mean-value theorem $\Delta^{n}_hf(x) = h^n\frac{d^n}{dx^n}f(x+nh\theta),\, \theta\in(0,1),$ combined with the proper estimation of the residual term (see details in \cite{Sharapudinov}).
 In order to obtain higher order approximation we consider function $\rho(x)$ as a probability of $x$  successes in a sequence of $N$ independent $(p,q)$-Bernuolli trials and use Theorem 16  from \S 3 of the work \cite{Petrov} by V.~V.~Petrov for the Bernoulli distribution\footnote{In book \cite{Petrov} a slightly different definition of the Hermite polynomials was used: $\text{He}_n(x) = (-1)^ne^{x^2/2}\frac{d^n}{dx^n}e^{-x^2/2}.$ They are related with polynomials $H_n(x)$ by the equality $\text{He}_n(x)= 2^{-n/2}H_n\big(\frac{x}{\sqrt{2}}\big)$. }:

\begin{Theorem}
\label{Th:Petrov}
Let $M$ be a nonnegative integer and $\sigma^2 =pq$. The following asymptotic expansion holds uniformly in $x$ such that $Np+(2Npq)^{1/2}x\in\mathbb{Z}$:
\begin{equation} \label{eq:PetrovTheorem}(1+|x|^{M+2})\bigg(\sqrt{N}\rho(\hat{x}) -  \frac{1}{\sqrt{2\pi}\sigma}e^{-x^2}\, \sum\limits_{\nu=0}^{M}\frac{\tilde{q}_{\nu}(x)}{N^{\nu/2}}\bigg)=o\Big(\frac{1}{N^{M/2}}\Big),\end{equation}
where $\hat{t}=Np+(2Npq)^{1/2}t$ and functions $\tilde{q}_\nu$ are defined as follows:
\begin{equation}\label{eq:q_nuDef}\tilde{q}_\nu(x) =\sum \frac{1}{2^{(\nu/2+s)}}H_{\nu+2s}(x)\prod\limits_{m=1}^\nu\frac{1}{k_m!}\Big(\frac{\gamma_{m+2}}{(m+2)!\sigma^{m+2}}\Big)^{k_m }, \end{equation}
with $\gamma_i, \ i\geq0,$ being cumulants of the Bernoulli $(p,q)$ distribution,  and the summation in the right-hand-side  taken over all nonnegative solutions $(k_1, k_2,\dots,k_\nu) $ of the equation $k_1+2k_2+\dots+\nu k_\nu=\nu $ such that $s = k_1+k_2+\dots+ k_\nu.$
\end{Theorem}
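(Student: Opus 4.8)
The plan is to read off $\rho(\hat x)$ as a local binomial probability and then to specialize Petrov's Theorem~16 to the Bernoulli law, the only genuine work being the translation between Petrov's normalization and Hermite convention and the ones used in \eqref{eq:PetrovTheorem}--\eqref{eq:q_nuDef}.

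First I would observe that for integer $\hat x\in\{0,\dots,N\}$ one has $\Gamma(1+\hat x)=\hat x!$ and $\Gamma(N+1-\hat x)=(N-\hat x)!$, so that
\[
\rho(\hat x)=\binom{N}{\hat x}p^{\hat x}q^{N-\hat x}=\mathbb{P}(S_N=\hat x),
\]
where $S_N=X_1+\dots+X_N$ is a sum of i.i.d.\ Bernoulli$(p)$ random variables. The Bernoulli law is a lattice distribution of span $1$ with mean $p$, variance $\sigma^2=pq$, and finite cumulants $\gamma_i$ of every order, so for each fixed $M$ the hypotheses of Petrov's local expansion (Theorem~16, \S3 of \cite{Petrov}) hold. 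Writing $t_{\mathrm{std}}=(S_N-Np)/\sqrt{Npq}$ for the standardized variable and $\phi(t)=(2\pi)^{-1/2}e^{-t^2/2}$, that theorem yields an expansion whose remainder stays $o(N^{-M/2})$ uniformly after multiplication by $1+|t_{\mathrm{std}}|^{M+2}$, namely
\[
\sqrt N\,\mathbb{P}(S_N=\hat x)=\frac{1}{\sigma}\,\phi(t_{\mathrm{std}})\Big(1+\sum_{\nu=1}^{M}N^{-\nu/2}\,q^{\mathrm{He}}_\nu(t_{\mathrm{std}})\Big)+o\big(N^{-M/2}\big),
\]
where the $q^{\mathrm{He}}_\nu$ are the usual Edgeworth polynomials assembled from the cumulants through the probabilists' Hermite polynomials $\mathrm{He}$; the span being $1$, no extra lattice factor appears.

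The remaining two steps are bookkeeping. For the coefficients I would expand $\exp\big(\sum_{m\ge1}\frac{\gamma_{m+2}}{(m+2)!\,\sigma^{m+2}}\frac{(it)^{m+2}}{N^{m/2}}\big)$ and collect the power $N^{-\nu/2}$: each nonnegative solution of $k_1+2k_2+\dots+\nu k_\nu=\nu$ contributes the monomial $\prod_m\frac{1}{k_m!}\big(\frac{\gamma_{m+2}}{(m+2)!\,\sigma^{m+2}}\big)^{k_m}$ times a single power $(it)^{\nu+2s}$ with $s=k_1+\dots+k_\nu$, since $\sum_m(m+2)k_m=\nu+2s$. By Fourier inversion the factor $(it)^r$ corresponds to $(-D)^r\phi=\mathrm{He}_r\,\phi$, so $q^{\mathrm{He}}_\nu$ is exactly the right-hand side of \eqref{eq:q_nuDef} but with $\mathrm{He}_{\nu+2s}$ in place of $2^{-(\nu/2+s)}H_{\nu+2s}$. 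For the normalization I would use $\hat x=Np+(2Npq)^{1/2}x$, whence $t_{\mathrm{std}}=\sqrt2\,x$; this turns $\phi(t_{\mathrm{std}})$ into $(2\pi)^{-1/2}e^{-x^2}$, reproducing the prefactor of \eqref{eq:PetrovTheorem}, while the footnote identity $\mathrm{He}_n(y)=2^{-n/2}H_n(y/\sqrt2)$ gives $\mathrm{He}_{\nu+2s}(\sqrt2\,x)=2^{-(\nu/2+s)}H_{\nu+2s}(x)$, supplying precisely the factor $2^{-(\nu/2+s)}$ in \eqref{eq:q_nuDef}.

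I expect the difficulty here to be organizational rather than analytic: no estimate beyond Petrov's theorem is needed, but one must track the $\sqrt2$ rescaling of the spatial variable and the physicists-versus-probabilists Hermite convention simultaneously, so that the cumulant-indexed coefficients, the power $2^{-(\nu/2+s)}$, and the uniform weight all come out exactly as stated. The weight $1+|x|^{M+2}$ is inherited verbatim from Petrov's formulation once $t_{\mathrm{std}}=\sqrt2\,x$ is substituted, so the only real check is that the change of variables leaves the uniformity intact.
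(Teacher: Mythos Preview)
Your proposal is correct and matches the paper's treatment: the paper does not prove Theorem~\ref{Th:Petrov} independently but states it as the specialization of Petrov's Theorem~16 to the Bernoulli law, with the footnote $\mathrm{He}_n(x)=2^{-n/2}H_n(x/\sqrt2)$ accounting for the Hermite convention; you have simply made the change of variables $t_{\mathrm{std}}=\sqrt2\,x$ and the cumulant bookkeeping explicit. The only cosmetic point is that Petrov's weight is $1+|t_{\mathrm{std}}|^{M+2}=1+2^{(M+2)/2}|x|^{M+2}$ rather than literally $1+|x|^{M+2}$, but the two differ by a bounded multiplicative constant and so give the same $o(N^{-M/2})$ statement.
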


To obtain the higher order approximation for $\Delta^{s}_h$ we use the formal representation $\Delta_h = e^{hD}-1$ where $D=\frac{d}{dx}$ is the difference operator which yields
\begin{equation}\label{eq:DeltaSeries}\Delta^{s}_h =(e^{hD}-1)^s = \sum\limits_{i=s}^{\infty} a_{s,i-s} (hD)^i.\end{equation}

For any nonnegative integer $K$ and any analytic real-valued function $f$ we can truncate the series and  simply write 
\begin{equation} \label{eq:s-diff_operator_asymp}\Delta^{s}_hf(x) = \sum\limits_{i=0}^{K} a_{s,i}\, D^{s+i}f(x)h^{s+i}+O(h^{K+s+1}).\end{equation}
Coefficients $a_{s,j}$ could be found by means of the Multinomial theorem as follows:
 \begin{equation}\label{eq:forward_h_diff_asymp}a_{s,j}=\sum s! \prod\limits_{r=1}^{j+1}\frac{1}{k_r!}\Big(\frac{1}{r!}\Big)^{k_r},\end{equation}
 where $s$ and $j $ are nonnegative integers and  the summation in the right-hand-side of \eqref{eq:forward_h_diff_asymp} is taken over all nonnegative solutions $(k_1, k_2,\dots,k_j) $ of the equation  $k_1+2k_2+\dots+j k_j=j $ satisfing $k_1+k_2+\dots+k_j=s $. In particular we have
 \begin{equation}
 \label{ex:a_ceof}a_{s,0}=1,\, a_{s,1}=\frac{s}{2},\, a_{s,2} = \frac{s(3s+1)}{24} \text{ and so on}\dots\end{equation}

Theorem \ref{Th:Petrov}  suggests that in order to obtain asymptotic expansion for $\Delta^{s}_h\, \rho(\hat{x}) $ we need the asymptotic expansions for $\Delta^{s}_{h\,}\big(e^{-x^2}\tilde{q}_\nu(x)\big) ,\ \nu\geq0$.
We denote by $b_{\nu,s}$ coefficients $\frac{1}{2^{(\nu/2+s)}}\prod\limits_{m=1}^\nu\frac{1}{k_m!}\Big(\frac{\gamma_{m+2}}{(m+2)!\sigma^{m+2}}\Big)^{k_m }$ arising in the right-hand-side of formula \eqref{eq:q_nuDef}. Thus we have $\tilde{q}_\nu(x) = \sum b_{\nu,s}H_{\nu+2s}(x)$. We denote by $\tilde{g}_{\nu,r}(x)$ the following expression $e^{x^2}\frac{d^r}{dx^r}e^{-x^2}\tilde{q}_\nu(x)$. For $e^{x^2}\frac{d^s}{dx^s}\big(e^{-x^2}H_{n}(x)\big)$ we have the identity \begin{equation}
\label{eq:DExp(x)Hn}
e^{x^2}\frac{d^s}{dx^s}\big(e^{-x^2}H_{n}(x)\big) =(-1)^ne^{x^2}\frac{d^s}{dx^s}\big(\frac{d^n}{dx^n}e^{-x^2} \big) = (-1)^sH_{n+s}(x), \end{equation}
which implies
\[\tilde{g}_{\nu,r}(x)=e^{x^2}\frac{d^r}{dx^r}e^{-x^2}\tilde{q}_\nu(x)= \sum (-1)^rb_{\nu,s}H_{\nu+2s+r}(x), \]
where limits of summation in the right-hand-side are the same  as in \eqref{eq:q_nuDef}. Let $A$ be a positive real and  $r$ be a nonnegative integer. In Appendix we show that we can differentiate\footnote{For   $s=0$ we use a convention $\frac{d^s}{dx^s}f(x)\equiv f(x)$.} asymptotic expansion \eqref{eq:PetrovTheorem} and the estimate of the residual is valid uniformly for \emph{any}\footnote{Note that Theorem \ref{Th:Petrov} states this for $r=0$ and values of $x$ from the discrete set only.} $x$ from the set $[-A,A]$:
\begin{equation}\label{eq:PetrovTheoremAfterDiff}\frac{d^r}{dx^r}\sqrt{N}\rho(\hat{x}(x)) =   \frac{1}{\sqrt{2\pi}\sigma}e^{-x^2}\, \sum\limits_{\nu=0}^{M}\frac{\tilde{g}_{\nu,r}(x)}{N^{\nu/2}}+o\Big(\frac{1}{N^{M/2}}\Big).\end{equation}
It is instructive to compare the above result with Theorem 7, Section 6, in~\cite{Petrov}.

\emph{Remark $1$.} Since parameter $M$ in Theorem \ref{Th:Petrov} and formula \eqref{eq:PetrovTheoremAfterDiff} is an arbitrary positive integer, we can write residual term as $O\Big(\frac{1}{N^{(M+1)/2}}\Big)$ instead of $o\Big(\frac{1}{N^{M/2}}\Big).$


Together with expansions \eqref{eq:PetrovTheoremAfterDiff} and  \eqref{eq:s-diff_operator_asymp} and taking into the account that $h\sim N^{-1/2}$ this  results the following asymptotic expansion for $\Delta^{s} \rho(\hat{x}) $:
\begin{equation}
\label{eq:Delta_s_rho}
\begin{gathered}
\sqrt{N}\Delta^{s}_h\,\rho(\hat{x}(x)) = \sum\limits_{i=0}^{K} a_{s,i} \frac{d^{s+i}}{dx^{s+i}} \sqrt{N}\rho(\hat{x})h^{s+i}+ O({N^{-(K+s+1)/2}})  =\\
=\frac{e^{-x^2}}{\sqrt{2\pi}\sigma}\bigg(\sum\limits_{i=0}^{K} a_{s,i} \sum\limits_{\nu=0}^{M}\frac{\tilde{g}_{\nu,r}(x)}{N^{\nu/2}}h^{s+i}\bigg)+O(N^{-(K+s+1)/2})=\\
=\frac{e^{-x^2}}{\sqrt{2\pi}\sigma}\bigg(\sum\limits_{i=0}^{K} a_{s,i} \sum\limits_{\nu=0}^{M-i}\frac{\tilde{g}_{\nu,r}(x)}{N^{\nu/2}}h^{s+i}\bigg)+O(N^{-(K+s+1)/2})
\end{gathered}
\end{equation}
where $K=M+1,$ $x=O(1)$ and  $\hat{x}(x) = Np+(2Npq)^{1/2}x.$
This expression is written in fact in terms of the Hermite polynomials, but  it also  could be  rewritten using the parabolic cylinder functions $D_n(x)$  due to their close relation  with the  Hermite polynomials: $D_n(x) = 2^{-n/2}e^{-x^2/4}H_n(\frac{x}{\sqrt{2}}), n\in\mathbb{N}\cup\{0\}$.


We denote by $\psi_{s}^K(x)$ the sum $\sum\limits_{i=0}^{K} a_{s,i} \sum\limits_{\nu=0}^{K-1-i}\frac{\tilde{g}_{\nu,s+i}(x)}{N^{\nu/2}}h^{s+i} $ where coefficients $a_{s,j}$, with $j$ and $s\in\mathbb{N}\cup\{0\},$ are defined by expression \eqref{eq:forward_h_diff_asymp}.

Using representation \eqref{eq:RodriguesRewrite} together with \eqref{eq:Delta_s_rho}  we  obtain our main result providing asymptotic expansion of the Krawtchouk polynomials  in terms of the Hermite polynomials:

\begin{Theorem}Let $M$ and $n$ be  nonnegative integers and $A$ be a positive real. Let $\hat{x} - Np = (2Npq)^{1/2}x $. Then the following asymptotic expansion holds uniformly in $|x|\leq A$:
\label{Th:asympInTermsOfHermite}
\begin{equation}
\label{eq:mainTheorem}\rho(\hat{x}) k_n^{(p)}(\hat{x})  =\frac{e^{-x^2}}{\sqrt{2\pi N }\sigma} \frac{(-q)^n}{n!}\sum\limits_{k=0}^{M}\binom{n}{k}n^{\underline{k}}\,(\hat{x}+n-k)^{^{\underline{(n-k)}}}
\psi_{n-k}^M(x)+O(N^{\frac{n-M-2}{2}}).\end{equation}
\end{Theorem}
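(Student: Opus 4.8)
The plan is to substitute the rewritten Rodrigues formula \eqref{eq:RodriguesRewrite}, evaluated at $\hat x=Np+(2Npq)^{1/2}x$, and then replace each discrete difference $\Delta^{n-k}\rho(\hat x)$ by the expansion \eqref{eq:Delta_s_rho}; the statement follows by collecting terms and bounding the remainder uniformly for $|x|\le A$. First I would record the exact identity
\[
\rho(\hat x)\,k_n^{(p)}(\hat x)=\frac{(-q)^n}{n!}\sum_{k=0}^{n}\binom{n}{k}n^{\underline{k}}\,(\hat x+n-k)^{\underline{(n-k)}}\,\Delta^{n-k}\rho(\hat x),
\]
keeping the polynomial prefactor $P_k:=\binom{n}{k}n^{\underline{k}}(\hat x+n-k)^{\underline{(n-k)}}$ \emph{exactly}, as it appears in the statement. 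Since each of its $n-k$ factors equals $Np+O(\sqrt N)$ uniformly in $|x|\le A$, we have the crude but essential weight estimate $|P_k|=O(N^{n-k})$; this is the amplification factor attached to the $k$-th difference, and it governs the whole error analysis.

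Next I would pass from the forward difference in $\hat x$ to the $h$-step difference in $x$ by \eqref{eq:Delta_h_rewrite}, with $h=(2Npq)^{-1/2}\sim N^{-1/2}$, and insert the expansion \eqref{eq:Delta_s_rho} for $\sqrt N\,\Delta_h^{\,n-k}\rho(\hat x(x))$ with $s=n-k$. The derivatives $D^{(n-k)+i}\rho$ needed there are supplied by the differentiated Petrov expansion \eqref{eq:PetrovTheoremAfterDiff}, which, by the Appendix, holds uniformly on the whole interval $[-A,A]$ and not merely on the lattice. Factoring the common Gaussian weight $e^{-x^2}/(\sqrt{2\pi N}\,\sigma)$ out of every summand converts the bracketed double sum over the indices $(i,\nu)$ into precisely $\psi_{n-k}^{M}(x)$, by the definition of that symbol. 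It then remains to truncate the outer sum at $k=M$: the discarded terms $k=M+1,\dots,n$ have leading size $|P_k|\cdot|\Delta^{n-k}\rho(\hat x)|=O(N^{n-k})\cdot O(N^{-(n-k+1)/2})=O(N^{(n-k-1)/2})$, whose largest member, at $k=M+1$, is exactly $O(N^{(n-M-2)/2})$ --- the remainder claimed in the theorem.

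The delicate point, and the main obstacle, is the uniform control of all internal residuals once they are multiplied by the growing weights $|P_k|=O(N^{n-k})$, the worst case being $k=0$, where the weight $N^{n}$ is largest. A single summand indexed by $(k,i,\nu)$ sits at order $N^{((n-k)-i-\nu-1)/2}$, so the principal terms are exactly those with $k+i+\nu\le M$; accordingly one expands the difference operator in \eqref{eq:s-diff_operator_asymp} to depth $K=M+1$ and records the differentiation residual of \eqref{eq:PetrovTheoremAfterDiff} as $O(N^{-(M+1)/2})$ (Remark $1$). A direct check then shows that, after amplification by $N^{n-k}$, both the difference-operator tail $O(h^{K+s+1})$ and the Petrov residual contribute $O(N^{(n-M-2)/2})$ uniformly in $|x|\le A$, the order being saturated at $k=0$. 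Assembling the three error sources --- the truncation of the $k$-sum at $k=M$, the truncation of the $(i,\nu)$-expansion that defines $\psi_{n-k}^{M}$, and the two analytic residuals --- into the single bound $O(N^{(n-M-2)/2})$ is the only genuine work; once the half-integer orders are lined up this way, matching the surviving principal terms with the double sum in \eqref{eq:mainTheorem} is immediate from the definitions of $\psi_{n-k}^{M}$ and $\tilde g_{\nu,r}$.
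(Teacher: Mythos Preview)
Your proposal is correct and follows essentially the same route as the paper's own proof: start from the rewritten Rodrigues identity \eqref{eq:RodriguesRewrite}, replace each $\Delta^{n-k}\rho(\hat x)=\Delta_h^{n-k}\rho(\hat x(x))$ by the approximation \eqref{eq:Delta_s_rho}, and read off the error from the size estimates $(\hat x+s)^{\underline{s}}=O(N^{s})$ and $\frac{e^{-x^2}}{\sqrt{2\pi N}\sigma}\psi_s^M(x)=O(N^{-(s+1)/2})$. The paper compresses the whole argument into two sentences; you spell out the same bookkeeping more carefully (separating the truncation of the $k$-sum, the difference-operator tail, and the Petrov residual), but there is no genuine methodological difference.
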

\begin{proof}
Since $h\sim N^{-1/2}$ we see that $\frac{e^{-x^2}}{\sqrt{2\pi N }\sigma}\psi_s^M(x)=O(N^{-(s+1)/2})$ and $(\hat{x}+s)^{^{\underline{s}}} = O(N^s)$
and therefore $(\hat{x}+s)^{^{\underline{s}}}\frac{e^{-x^2}}{\sqrt{2\pi N }\sigma}\psi_s^M(x) = O(N^{(s-1)/2}). $

Formula \eqref{eq:mainTheorem} follows directly from Rodrigues formula \eqref{eq:RodriguesRewrite} by changing $\Delta_h^s$ by their approximations $\frac{e^{-x^2}}{\sqrt{2\pi N }\sigma}\psi_s^M(x)$ obtained in \eqref{eq:Delta_s_rho}.

\end{proof}
In the special case $M=0$ Theorem \ref{Th:asympInTermsOfHermite} reduces to the well-known result:
\[k_n^{(p)}(\hat{x}) =\Big(\frac{Npq}{2}\Big)^{n/2}\frac{H_n(x)}{n!} +o(N^{\frac{n}{2}}),
\] stated above in \eqref{eq:classicalApproximOfKrawByHermite} (we used $\rho(\hat{x}) = \frac{1}{\sqrt{2\pi N}\sigma}e^{-x^2}+o(1)$ from Theorem \ref{Th:Petrov}~here).

In the general case expression \eqref{eq:mainTheorem}, in fact,  provides expansion\footnote{Of course, it is sensible to take $M\leq n$. } \eqref{eq:aympExpansion} (i.e. its right-hand-side comprises only powers of $N$ and $v=x\sqrt{2Npq}$, not of $\sqrt{N}\sim h^{-1}$). To see this note, that depending on parity of index $n$ Hermite polynomials $H_n$  contain either  odd or even powers only, expression \eqref{eq:q_nuDef} does not change parity of indexes of Hermite polynomials while
\eqref{eq:DExp(x)Hn} implies expansion \eqref{eq:s-diff_operator_asymp} also preserves parity of powers.

In the special case of $M=2$  and $v=\hat{x}-Np = o({N}^{1/3})$ we are going to obtain a more natural expression then (slightly cumbersome) expression \eqref{eq:mainTheorem} suggested by Theorem \ref{Th:asympInTermsOfHermite}.

First of all note that for the Hermite polynomials we have the following representations (see e.g. \cite{NikiforovSuslovUvarov}):
\begin{equation*}
\begin{gathered}H_{2l}(x) = (-1)^l2^l(2l-1)!!\big(1+\sum\limits_{j=1}^l\frac{4^j(-l)^{\bar{j}}}{(2j)!}x^{2j}\big), \\ H_{2l+1}(x) = (-1)^l2^{l+1}(2l+1)!!\big(x+\sum\limits_{j=1}^l\frac{4^j(-l)^{\bar{j}}}{(2j+1)!}x^{2j+1}\big), \end{gathered}\end{equation*}
where $l$ is a nonnegative integer. Additional assumption  $x\rightarrow0$  provides asymptotic expansions:
\begin{equation}
\label{eq:asympHermite}\begin{gathered}
H_{2l}(x) = (-1)^l2^l(2l-1)!!\big(1-2lx^2\big)+o(x^2),\ l =1, 2\dots ,\\ H_{2l+1}(x) = (-1)^l2^{l+1}(2l+1)!!x+o(x^2), \ l = 0,1,\dots\end{gathered}\end{equation}

For any nonnegative integers $l, m$ and  $C$ we have the following asymptotic expansion for the falling factorial $ (m+C)^{\underline{l}}$ as $m\rightarrow\infty$:

\begin{equation}\label{eq:asympFalling} (m+C)^{\underline{l}} = m^l+\Big(lC- \frac{l(l-1)}{2}\Big)m^{l-1}+O(m^{l-2}). \end{equation}

\begin{Corollary}
\label{Cor:asympSimplified}
For any sequence $\varepsilon(N)$ such that   $\lim\limits_{N\rightarrow\infty}\varepsilon(N)=0,$
the following asymptotic expansions hold uniformly in $v$ satisfying $|v|\leq\varepsilon(N)N^{1/3}$ :
\[k_{2l}^{(p)}(Np+v)= (-1)^l\frac{(2l-1)!!\big(pqN\big)^l}{(2l)!}\bigg(1-\frac{9v^2+t_1v+t_2}{9pq N}l
\bigg)+o(N^{l-1}),\]
where $t_1 =6(p-\frac12)(4l-1) $ and $t_2 = (l-1)\big(1+4l+(16l-5)pq\big)$ and $l \in \mathbb{N};$
\[k_{2l+1}^{(p)}(Np+v)=  (-1)^l\frac{(2l-1)!!\big(pqN\big)^l}{(2l)!} \frac{4l(p-\frac12)+3v}{3}+o(N^{l}) ,\]
where $l \in\mathbb{N}\cup\{0\}. $
\end{Corollary}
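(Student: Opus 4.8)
The plan is to specialize Theorem~\ref{Th:asympInTermsOfHermite} to $M=2$ and then pass from $\rho(\hat x)k_n^{(p)}(\hat x)$ to $k_n^{(p)}(\hat x)$ by dividing out the density. The key observation is that the Gaussian prefactor $\frac{e^{-x^2}}{\sqrt{2\pi N}\sigma}$ on the right of \eqref{eq:mainTheorem} is exactly the $\nu=0$ term of Petrov's expansion \eqref{eq:PetrovTheorem} for $\rho(\hat x)$ (recall $\tilde q_0(x)=H_0(x)=1$), so it cancels cleanly: writing $S=\frac{(-q)^n}{n!}\sum_{k=0}^{2}\binom{n}{k}n^{\underline{k}}(\hat x+n-k)^{\underline{n-k}}\psi_{n-k}^2(x)$ for the truncated Rodrigues sum, one gets
\[
k_n^{(p)}(\hat x)=S\cdot\Big(1-\tilde q_1(x)N^{-1/2}+\big(\tilde q_1(x)^2-\tilde q_2(x)\big)N^{-1}+o(N^{-1})\Big),
\]
the additive error of \eqref{eq:mainTheorem}, after division by $\rho(\hat x)\sim N^{-1/2}$, contributing only $O(N^{(n-3)/2})$. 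The hypothesis $|v|\le\varepsilon(N)N^{1/3}$ forces $x=v/\sqrt{2Npq}\to0$, which is precisely what licenses the small-$x$ Hermite expansions \eqref{eq:asympHermite}.

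Next I would insert all the explicit data. From \eqref{ex:a_ceof} one has $a_{s,0}=1,\ a_{s,1}=s/2$; the Bernoulli cumulants are $\gamma_2=pq=\sigma^2,\ \gamma_3=pq(1-2p),\ \gamma_4=pq(1-6pq)$, which determine $\tilde q_1,\tilde q_2$ through \eqref{eq:q_nuDef} and the shifted Hermite polynomials $\tilde g_{\nu,r}(x)=\sum(-1)^r b_{\nu,s'}H_{\nu+2s'+r}(x)$. There are then three sources of $N$-dependence to unify: the explicit powers $h^{s+i}\sim N^{-(s+i)/2}$ inside each $\psi$; the falling factorials $(\hat x+n-k)^{\underline{n-k}}$, expanded first by \eqref{eq:asympFalling} and then binomially in $v$ via $\hat x=Np+v$, which produce integer powers of $N$ times powers of $v$ of every parity; and the Hermite factors, which after $x=v/\sqrt{2Npq}$ become powers of $v/\sqrt N$. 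The heart of the calculation is that $v$ enters in two guises --- explicitly through $\hat x$ and rescaled through $x$ --- and these must be merged into a single expansion in $v$ and $N$.

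I would then collect terms by parity of $n$ and by order of $N$. For odd $n=2l+1$ the stated error $o(N^l)$ means only the coefficient of $N^l$ is needed: its $v$-linear part comes from $H_{2l+1}(x)\sim x$ in the $k=0$ term, the constant $4l(p-\tfrac12)$ comes from the skewness correction $\tilde g_{1,2l+1}\propto\gamma_3=pq(1-2p)$, and the reciprocal-density corrections $\tilde q_\nu$ are negligible at this order. For even $n=2l$ one must retain both $N^l$ and $N^{l-1}$; the $k=0,1,2$ terms all contribute at $N^{l-1}$, and each coefficient is fed by several distinct pieces --- the $9v^2$ from the $-2lx^2$ term of $H_{2l}$; the $t_1v$ from the $v$-linear falling-factorial term, from the $a_{s,1}$ difference correction carrying $H_{2l+1}$, from the skewness term $\tilde g_{1,\cdot}$, from the $-\tilde q_1 N^{-1/2}$ reciprocal factor, and from the $k=1$ term; and $t_2$ from the second falling-factorial correction, the $k=1$ skewness piece, the $k=2$ leading piece, and the $\tilde q_2$ reciprocal term.

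The main obstacle is exactly this bookkeeping and the attendant cancellations: numerous contributions scattered across the Rodrigues sum, the difference-operator coefficients, the cumulant corrections, and the reciprocal density must be gathered and shown to collapse into the compact forms $t_1=6(p-\tfrac12)(4l-1)$ and $t_2=(l-1)\big(1+4l+(16l-5)pq\big)$; the overall factor $(l-1)$ in $t_2$ signals that substantial cancellation occurs and gives a useful consistency check (both sides must vanish at $l=1$). Finally I would confirm that all truncation errors are controlled uniformly in $|v|\le\varepsilon(N)N^{1/3}$: the $o(x^2)$ remainders in \eqref{eq:asympHermite} and the $O(m^{l-2})$ remainders in \eqref{eq:asympFalling} are, after scaling, of order $o(N^{l-1})$ in the even case and $o(N^l)$ in the odd case, matching the $O(N^{(n-3)/2})$ already incurred when dividing \eqref{eq:mainTheorem} by $\rho(\hat x)$.
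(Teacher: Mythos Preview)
Your proposal is correct and follows essentially the same route as the paper: specialize Theorem~\ref{Th:asympInTermsOfHermite} to $M=2$, divide out $\rho(\hat x)$ via Petrov's expansion, insert the small-$x$ Hermite asymptotics \eqref{eq:asympHermite} and the falling-factorial expansion \eqref{eq:asympFalling}, and collect terms by parity of $n$. The only difference is organizational: the paper first records the normalized derivatives $\psi(x)\frac{d^k}{dx^k}\rho(\hat x)$ (with $\psi(x)=\sqrt{2\pi N}\,\sigma\,e^{x^2}$) as explicit parity-separated building blocks and then assembles them, whereas you keep the full Rodrigues sum $S$ intact and multiply by the reciprocal series $1-\tilde q_1 N^{-1/2}+(\tilde q_1^2-\tilde q_2)N^{-1}$ --- two equivalent bookkeepings of the same computation.
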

\begin{proof}
We denote by $\psi(x)$ the function $\frac{\sqrt{2\pi N }\sigma}{e^{-x^2}}$.
Let $k$ be  a nonnegative integer. Applying Theorem \ref{Th:Petrov} for $M=2$ and identity \eqref{eq:DExp(x)Hn}, we obtain
\[\begin{gathered}\psi(x)\frac{d^k}{dx^k}\rho(\hat{x}) =(-1)^k\Big( H_{k}(x)+\frac{\gamma_3}{2^{3/2}\sigma^3}\frac{H_{3+k}(x)}{3!\sqrt{N}}+\\+\frac{\frac{\gamma_4}{\sigma^4}H_{4+k}(x)+\frac{1}{3!}\big(\frac{\gamma_3}{\sigma^3}\big)^2 H_{6+k}(x) }{4\cdot4!N}\Big)+o\Big(\frac{1}{N}\Big),\end{gathered}\]
where
$\gamma_3 =pq_{ }(1-2p),\, \gamma_4 =-pq_{ }(6pq-1). $\  
Under the additional assumption\footnote{The slightly weaker assumption $v=O({N}^{1/3})$ would already need additional term $\frac{1}{6}\frac{v^3(q-p)}{(pq)^2N^2}$ in the right-hand-side of \eqref{eq:M(v)simplified}.} $v:=\hat{x}-Np=o({N}^{1/3})$ we get a simpler expression:
\begin{equation}
\label{eq:M(v)simplified}\psi(x)\rho\Big(\frac{v}{\sqrt{2 N } \sigma}\Big) = 1 -\frac{1-pq-6v(p-q)}{12pqN}+o\Big(\frac{1}{{N}}\Big).\end{equation}
In the same manner for odd-order derivatives we have
\begin{equation*}
\psi(x)\frac{d^{2l+1}}{dx^{2l+1}}\rho(\hat{x}) = (2l-1)!!(-2)^l\Big(1+\frac{36lv^2+\tau_1v+\tau_2}{36pq N}\Big)+o(\frac{1}{N}),
\end{equation*}
where $\tau_1 = 6(1-2p)(2l+3)(2l+1),$ $\tau_2 = (2l+1)(2l+3)(1+l-(1+4l)pq)$, and for even-order
\begin{equation*}
\begin{gathered}
\psi(x)\frac{d^{2l}}{dx^{2l}}\rho(\hat{x}) = (-1)^{l+1}(2l+1)!!2^l\frac{\sqrt{2}}{\sqrt{Npq}}(v+\frac{(1-2p)(2l+3)}{6})+o(\frac{1}{N}),
\end{gathered}
\end{equation*}
with $l\in\mathbb{N}$.
After some algebra using expansions \eqref{eq:asympHermite} and \eqref{eq:asympFalling}  and ignoring terms of the order lower than  $[\frac{n-1}{2}]$ we  obtain from \eqref{eq:mainTheorem} the required asymptotic expansion (and it naturally depends on the parity of $n$).
\end{proof}

Taking higher-order asymptotic expansions in \eqref{eq:asympHermite} and \eqref{eq:asympFalling} and taking appropriately high value of $M$ one can easily obtain other values of $c_j,$ $j\geq1,$ from expansion \eqref{eq:aympExpansion}.

We conclude this paper with a slight modification of  Corollary \ref{Cor:asympSimplified} for the function $K_{n}(x,p, N_1)$, with  $N_1 = N-i,$ $v=x-Np=o({N}^{1/3})$ and $i = O(1)$.

\begin{Corollary}
\label{Cor:specialAsymp}
For any sequence $\varepsilon(N)$ such that   $\lim\limits_{N\rightarrow\infty}\varepsilon(N)=0,$
the following asymptotic expansions hold uniformly in $v$ satisfying $|v|\leq\varepsilon(N)N^{1/3}$:
\[K_{2l}(x,p, N-i)= \Big(-\frac{q}{p}\Big)^l\frac{(2l-1)!!}{N^l}\bigg(1-\frac{9(v+ip)^2+\tilde{t}_1(v+ip)+\tilde{t}_2}{9pq N}l
\bigg)+o\big({N^{-l-1}}\big),\]
where $\tilde{t}_1 =6(p-\frac12)(4l-1) $ and $\tilde{t}_2 = (l-1)\big(1+4l+(16l-5)pq\big)-9pq(i+2l-1)$ and $l \in \mathbb{N}.$
\[K_{2l+1}(x,p, N-i)=  \Big(-\frac{q}{p}\Big)^l\frac{(2l+1)(2l-1)!!}{N^{l+1}}\cdot \frac{4l(p-\frac12)+3(v+ip)}{3p}+o(N^{-l-1}) ,\]
where $l \in\mathbb{N}\cup\{0\}. $
\end{Corollary}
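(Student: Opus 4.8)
The proof rests on the normalization relation \eqref{eq:KrawtchoukDef}, which lets us pass between the non-normalized polynomial and the normalized one for the \emph{same} parameter $N-i$:
\[K_n(x,p,N-i)=\frac{k_n^{(p)}(x,N-i)}{(-p)^n\binom{N-i}{n}}.\]
The plan is to feed the right-hand side into Corollary \ref{Cor:asympSimplified} after recentering. Writing $N_1=N-i$ and $x=Np+v$, the natural variable for $k_n^{(p)}(\cdot,N_1)$ is $v_1:=x-N_1p=v+ip$, since the expansion in Corollary \ref{Cor:asympSimplified} is organized around the mean $N_1p$. Because $i=O(1)$ and $|v|\le\varepsilon(N)N^{1/3}$, we have $|v_1|\le\varepsilon(N)N^{1/3}+ip=(\varepsilon(N)+O(N^{-1/3}))N^{1/3}$, so $v_1$ still satisfies the hypothesis of Corollary \ref{Cor:asympSimplified} with the slightly enlarged null sequence $\varepsilon_1(N)=\varepsilon(N)+O(N^{-1/3})\to0$; this also transfers the uniformity in $v$.

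First I would substitute the even- and odd-order expansions of Corollary \ref{Cor:asympSimplified} (with $N\mapsto N_1$, $v\mapsto v_1$) into the numerator and expand the denominator $(-p)^n\binom{N-i}{n}=(-p)^n (N-i)^{\underline{n}}/n!$ using the falling-factorial asymptotics \eqref{eq:asympFalling} with $m=N$, $C=-i$. Collecting the elementary factors, the ratio reduces in the even case $n=2l$ to $(-q/p)^l(2l-1)!!\,(N-i)^l/(N-i)^{\underline{2l}}$ times the bracket of Corollary \ref{Cor:asympSimplified}, and to an analogous expression in the odd case $n=2l+1$; here the sign $(-1)^n$ produced by $(-p)^n$ must be tracked carefully, as it alone governs the overall sign. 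Expanding $(N-i)^l/(N-i)^{\underline{2l}}=N^{-l}\big(1+l(i+2l-1)/N+O(N^{-2})\big)$ via \eqref{eq:asympFalling} yields the leading constant $(-q/p)^l(2l-1)!!/N^l$ and isolates the binomial-induced relative correction $l(i+2l-1)/N$.

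In the even case the two sources of $1/N$ corrections — the bracket $1-(9v_1^2+t_1v_1+t_2)l/(9pqN_1)$ coming from $k_{2l}^{(p)}$ and the factor $1+l(i+2l-1)/N$ coming from the binomial — multiply out, using $N_1^{-1}=N^{-1}+O(N^{-2})$ to replace $N_1$ by $N$ at this order, to
\[1-\frac{l}{N}\Big(\frac{9v_1^2+t_1v_1+t_2}{9pq}-(i+2l-1)\Big)+O(N^{-2}).\]
Comparing with the claimed form $1-(9v_1^2+\tilde t_1 v_1+\tilde t_2)l/(9pqN)$, $v_1=v+ip$, forces $\tilde t_1=t_1=6(p-\tfrac12)(4l-1)$ and $\tilde t_2=t_2-9pq(i+2l-1)$, exactly the stated constants. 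In the odd case the bracket is $O(v_1)=o(N^{1/3})$ rather than $1+O(1/N)$, so the binomial-induced relative correction only affects the $o(N^{-l-1})$ remainder; hence only the leading factor survives and no new constant appears beyond the recentering $v\mapsto v+ip$. The remainders transform correctly: the even remainder $o(N_1^{l-1})$ and the odd remainder $o(N_1^{l})$, divided by $p^n\binom{N-i}{n}=O(N^{2l})$ (resp. $O(N^{2l+1})$), both become $o(N^{-l-1})$.

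The main obstacle is the bookkeeping in the even case: three ingredients must be expanded to the correct relative order and combined — the intrinsic $1/N$ correction inside $k_{2l}^{(p)}$, the discrepancy between $(pqN_1)^l$ and $(pqN)^l$, and the $\binom{N-i}{2l}^{-1}$ expansion — and one must check that the $N_1\to N$ replacements and the sign from $(-p)^n$ are consistent throughout. Everything else is the routine algebra of collecting powers of $N$ and reading off the coefficients of $v_1^2,\ v_1,\ 1$ to identify $\tilde t_1,\tilde t_2$.
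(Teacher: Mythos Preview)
Your proposal is correct and follows essentially the same route as the paper: recenter $x=Np+v=N_1p+v_1$ with $v_1=v+ip$, invoke the normalization relation \eqref{eq:KrawtchoukDef} to write $K_n(x,p,N_1)=k_n^{(p)}(x,N_1)/\big[(-p)^n\binom{N_1}{n}\big]$, apply Corollary~\ref{Cor:asympSimplified} with $N\mapsto N_1$, and then expand the binomial factor via the falling-factorial asymptotics \eqref{eq:asympFalling} together with $(N-i)^{-l}=N^{-l}+ilN^{-l-1}+O(N^{-l-2})$. The paper's proof is a single sentence gesturing at exactly these ingredients; your write-up simply fills in the bookkeeping (the check that $v_1$ still satisfies the hypothesis, the identification of $\tilde t_2=t_2-9pq(i+2l-1)$, and the remainder tracking) that the paper leaves implicit.
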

\begin{proof} We have
$K_{n}(x,p, N_1) =K_{n}(Np+v,p, N_1) = K_{n}(N_1p+v_1,p, N_1)  = (-p)^n\binom{N_1}{n} \ k_{n}^{(p)}(N_1p+v_1, N_1),  $
where $v_1 = v+ip$. Using \eqref{eq:asympFalling} together with expression $(N-i)^{-l}=(N)^{-l}+il\,(N)^{-l-1}+O(N^{-l-2})$ we easily obtain the required asymptotic expansions from Corollary \ref{Cor:asympSimplified}.
\end{proof}
The uniform asymptotic expansions from Corollaries \ref{Cor:asympSimplified} and  \ref{Cor:specialAsymp} are obtained under the assumption $|v|  \leq\varepsilon(N)N^{1/3}.$ We can also use the preceding asymptotic expansions for $|v|  \leq\varepsilon(N)N^{1/2},\, \varepsilon(N)\rightarrow0$. However, if $N^{1/3}=O(v)$ we have to multiply the estimations of the residual terms of these asymptotic expansions by $\sqrt{N}$.


\section{Appendix}
In this section we prove formula \eqref{eq:PetrovTheoremAfterDiff}.
We denote by $S_N^M(\xi) $ such function that
\[\sqrt{N}\rho(\hat{\xi}) = e^{S_N^M(\xi)}\phi^M(\xi),\]
where $\phi^M(\xi) = \frac{1}{\sqrt{2\pi}\sigma}e^{-\xi^2}\, \sum\limits_{\nu=0}^{M}\frac{\tilde{q}_{\nu}(\xi)}{N^{\nu/2}}$ and polynomials $\tilde{q}_\nu$ are defined in Theorem 1 and, as above,  $\hat{s} = Np+\sqrt{2 N pq} s$. We denote by  $\phi^M_s(\xi)$ the  derivative $\frac{d^s}{d\xi^s}\phi^M(\xi) =(-1)^s\frac{1}{\sqrt{2\pi}\sigma}e^{-\xi^2}\, \sum\limits_{\nu=0}^{M}\frac{\tilde{q}_{\nu+s}(\xi)}{N^{\nu/2}}$. Using the integral Cauchy formula we obtain
\[\sqrt{N}\frac{d^s}{dx^s}\rho(\hat{x}(x)) = \frac{\sqrt{N}s!}{2\pi i}\int\limits_{C_x}\frac{\rho(\hat{\xi})d\xi}{(x-\xi)^{s+1}}=\]
\[=\frac{s!}{2\pi i}\bigg[\int\limits_{C_x} \frac{\phi^M(\xi)d\xi}{(x-\xi)^{s+1}} + \int\limits_{C_x}\frac{\big(\sqrt{N}\rho(\hat{\xi}) -\phi^M(\xi)\big)d\xi}{(x-\xi)^{s+1}}\bigg] =\]
\[ = \phi_s^M(x)+\frac{s!}{2\pi i} \int\limits_{C_x}\frac{\phi^M(\xi)\big(e^{S_N^M(\xi)}-1\big)d\xi}{(x-\xi)^{s+1}}\]
where $C_x$ is a closed contour around point $x$. Somewhat arbitrary we set  $C_x$ to be a unit circle around point $x$: \[C_x = \{\xi\, | \,\xi =x+ e^{i\varphi}, \varphi\in[0,2\pi]\}.\]

\begin{Lemma} Let $M$ be a nonnegative integer and $A$ be a positive real. Then for  $x\in[-A,A]$ and $\xi\in C_x$ we have the uniform estimate:
\[\big|e^{S_N^M(\xi)}-1\big| = o(N^{-M/2}).\]
\end{Lemma}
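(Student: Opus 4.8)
The plan is to show directly that $S_N^M(\xi) = o(N^{-M/2})$ uniformly on the family of circles $C_x$, $|x|\le A$, and then to conclude from the elementary inequality $|e^{w}-1| \le |w|\,e^{|w|}$. By the defining relation $\sqrt N\rho(\hat\xi) = e^{S_N^M(\xi)}\phi^M(\xi)$ we have $S_N^M(\xi) = \log\big(\sqrt N\rho(\hat\xi)\big) - \log\phi^M(\xi)$, so it suffices to prove that these two logarithms possess asymptotic expansions in powers of $N^{-1/2}$ with polynomial-in-$\xi$ coefficients that coincide through order $N^{-M/2}$.

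First I would record that on every $C_x$ with $|x|\le A$ one has $|\mathrm{Im}\,\xi|\le 1$ and $|\mathrm{Re}\,\xi|\le A+1$, so that $\hat\xi = Np+\sqrt{2Npq}\,\xi$ satisfies $\mathrm{Re}\,\hat\xi \sim Np$ and $\mathrm{Im}\,\hat\xi = O(\sqrt N)$. Consequently both Gamma-function arguments $1+\hat\xi$ and $N+1-\hat\xi$ in $\rho(\hat\xi)=N!\,p^{\hat\xi}q^{N-\hat\xi}/\big(\Gamma(1+\hat\xi)\Gamma(N+1-\hat\xi)\big)$ lie in a sector $|\arg z|\le\delta$ with $\delta\to0$ and have moduli of order $N$. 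Applying Stirling's series $\log\Gamma(z)=(z-\tfrac12)\log z - z+\tfrac12\log(2\pi)+\sum_k \tfrac{B_{2k}}{2k(2k-1)z^{2k-1}}$, which is valid with uniform remainder in such a sector, and re-expanding each term in the increment $\sqrt{2Npq}\,\xi/(Np)=O(N^{-1/2})$, yields
\[\log\big(\sqrt N\rho(\hat\xi)\big) = -\xi^2 - \log(\sqrt{2\pi}\sigma) + \sum_{j\ge1}\frac{\lambda_j(\xi)}{N^{j/2}},\]
an asymptotic expansion with polynomial coefficients $\lambda_j$, uniform over the region; this is the substantive analytic step.

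Next I would match this against $\phi^M$. Since $\tilde q_0\equiv 1$ (the equation $k_1+2k_2+\dots=0$ in \eqref{eq:q_nuDef} has only the trivial solution), the partition sum in \eqref{eq:q_nuDef} is exactly the extraction of the coefficient of $N^{-\nu/2}$ in the formal exponential $\exp\big(\sum_{m\ge1} N^{-m/2}\tfrac{\gamma_{m+2}}{(m+2)!\sigma^{m+2}}(\cdot)\big)$ expressed through Hermite polynomials; i.e. the $\tilde q_\nu$ are the Edgeworth coefficients. Hence $\sum_{\nu=0}^M \tilde q_\nu(\xi)N^{-\nu/2}$ is the order-$M$ truncation of $\exp\big(\sum_{j\ge1}\lambda_j(\xi)N^{-j/2}\big)$ with the very same $\lambda_j$ as above, this being the consistency between the Stirling expansion of $\log\rho$ and the Fourier-inversion derivation of Theorem~\ref{Th:Petrov} (both describe the same density $\rho$, and asymptotic expansions are unique). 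Taking logarithms,
\[\log\phi^M(\xi) = -\xi^2 - \log(\sqrt{2\pi}\sigma) + \sum_{j=1}^M \frac{\lambda_j(\xi)}{N^{j/2}} + O\big(N^{-(M+1)/2}\big),\]
where I also use that on the region $|e^{-\xi^2}|$ is bounded above and below while $\sum_{\nu=0}^M\tilde q_\nu N^{-\nu/2}=1+O(N^{-1/2})$, so that $|\phi^M(\xi)|\ge c(A)>0$ for large $N$ and the logarithm is legitimate. Subtracting the two displays cancels all terms through order $N^{-M/2}$, giving $S_N^M(\xi)=O\big(N^{-(M+1)/2}\big)=o(N^{-M/2})$ uniformly, whence $|e^{S_N^M(\xi)}-1| \le |S_N^M(\xi)|\,e^{|S_N^M(\xi)|}=o(N^{-M/2})$.

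I expect the main obstacle to be the uniform complex Stirling step: one must check that the remainder in the Gamma-function expansion remains $o(N^{-M/2})$ uniformly in $\xi\in C_x$, $|x|\le A$, despite $\mathrm{Im}\,\hat\xi$ growing like $\sqrt N$. This is controlled because $|\hat\xi|\sim N$ forces each Stirling tail term $z^{-(2k-1)}$ to be $O(N^{-(2k-1)})$ and each re-expansion increment to carry a factor $N^{-1/2}$, while $\arg(1+\hat\xi),\arg(N+1-\hat\xi)\to0$ keeps us away from the branch cut. The secondary point is the bookkeeping that the polynomial coefficients produced by Stirling genuinely agree with the Edgeworth coefficients $\lambda_j$, which is secured by uniqueness of asymptotic expansions together with Theorem~\ref{Th:Petrov} holding at the real lattice points.
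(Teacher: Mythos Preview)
Your approach is essentially the same as the paper's: both apply the complex Stirling expansion to $\log\rho(\hat\xi)$, obtain a polynomial-in-$\xi$ expansion in powers of $N^{-1/2}$, match its coefficients with the $\tilde q_\nu$ by invoking Theorem~\ref{Th:Petrov} at the real lattice points together with uniqueness of asymptotic expansions (the paper phrases this as coincidence of two polynomial families at $[\sqrt N]$ points), and then deduce $S_N^M(\xi)=o(N^{-M/2})$ uniformly before applying the elementary bound $|e^w-1|\le|w|e^{|w|}$. The paper packages the Stirling computation through an auxiliary function $\Phi_m$ and cites Sharapudinov for the uniform sector estimate you argue directly, but the logical structure is identical.
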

\begin{proof}
The Stirling expansion for the Gamma function (see e.g. book \cite{Fedoruk}, p.~34, Example 1.2., or \cite{AbramovitsStigan}, p.~83) could be written as follows:
\begin{equation}\label{Gamma:approximation}\ln \Gamma(z) =  (z-\frac12)\ln(z)-z+\frac{\ln(2\pi)}{2}+F_m(z)+O(|z|^{-2m-1}),\end{equation}
where $|z|\rightarrow\infty, \ |\text{arg}\, z|\leq \pi-\varepsilon<\pi$, $F_m(z)= \sum_{k=1}^{m} \frac{ B_{2k}}{2k(2k-1)z^{2k-1}}$ and $B_k,\ k\geq 1,$  are Bernoulli numbers. Using this expression it was shown in \cite{Sharapudinov}, formula~(17), that
\[\ln\rho({z}) = -\frac{N}{2pq}\big(\frac{z}{N}-p\big)^2-\frac12\ln(2\pi N pq)+S_N^0\Big(\frac{z-Np}{\sqrt{2Npq}}\Big),  \ z\rightarrow\infty, \]
where for arbitrary integer $m\geq0$ we have\footnote{This, of course, follows from estimate of the residual in \eqref{Gamma:approximation} see details, e.g. in \cite{AbramovitsStigan}.} $S_N^0(\xi) =  \Phi_m(\xi)+ O(N^{-2m-1})$
and the function $\Phi_m(\xi)$ is defined by the identity
\[\Phi_m(\xi) = F_m(N) - F_m(\hat{\xi})-F_m(N-\hat{\xi}) - Nr\Big(\frac{\hat{\xi}}{N}\Big) - \frac12D\Big(\frac{\hat{\xi}}{N}\Big),\]
with  $r(\tau) = \tau\ln\frac{\tau}{p}+(1-\tau)\ln\frac{1-\tau}{q}-\frac{1}{2pq}(\tau-p)^2,$\,
  $D(\tau) = \ln(1+\frac{(p-\tau)(\tau-q)}{pq})$. The function $\Phi_m(\xi)$  is analytic in the union $\bigcup_{x} B_x$ of unit balls $B_x$ bounded by the circles $C_x, \ x\in[-A,A]$.
For the function $S_N^0(\xi)$ it was shown  in \cite{Sharapudinov}, formula~(27), that $|S_N^0(\xi)| \leq \frac{C}{\sqrt{N}}$ for $\xi\in C_x$ and $x\in[-A,A]$. Since we know from Theorem~\ref{Th:Petrov} that at any point $x$ such that $\hat{x}=Np+(\sqrt{2Npq}x)\in \mathbb{Z}$  it holds that \[\big|e^{S_N^0(x) } -\sum\limits_{\nu=0}^{M}\frac{\tilde{q}_{\nu}(x)}{N^{\nu/2}}\big| = o(N^{-M/2}),\]
therefore it is sufficient to show that \[\big|e^{\Phi_{m}(\xi) } -\sum\limits_{\nu=0}^{M}\frac{\tilde{p}_{\nu}(\xi)}{N^{\nu/2}}\big| = o(N^{-M/2})\]
for some chosen $m=m(M),$ $\xi\in C_x$ and $x\in[-A,A]$ and \emph{some} polynomials $\{\tilde{p}_{\nu}(\xi)\}_{\nu=0}^M$. If it is already shown, then making $N$ large enough we see that $\tilde{p}_{\nu}(\xi) = \tilde{q}_{\nu}(\xi), 0\leq\nu\leq M,$  for all $\xi$ due to their coincidence in at least $[\sqrt{N}]$ points. Analyzing functions $F_m(\hat{\xi}), r(\frac{\hat{\xi}}{N}), D(\frac{\hat{\xi}}{N})$ as functions of variable $\frac{1}{\sqrt{N}}$ and parameter $x$ we see that for each of these functions we can write the Taylor series\footnote{We present initial terms of these series in the Remark~2 below.} as $\frac{1}{\sqrt{N}}\rightarrow 0$ of the form: $\sum\limits_{j=0}^{\infty}\frac{c_j}{(\sqrt{N})^j}\xi^{j+h}$ for some integer $h$.  Since $x\in[-A,A]$ we can truncate  each  series and write  $\sum\limits_{j=0}^{m}\frac{c_j}{(\sqrt{N})^j}x^{j+h}+f(N)$, where $|f(N)|\leq\frac{C}{N^{m}}$ uniformly for $\xi\in \bigcup_{x} B_x$. It is sufficient to set $m$ to be equal to $2M$. In the same manner truncating the Taylor series for the function $e^{\Phi_m(\xi)}$ we see that $|e^{\Phi_m(\xi)}- \sum\limits_{\nu=0}^{m}\frac{\tilde{p}_{\nu}(\xi)}{N^{\nu/2}}|=o(\frac{1}{N^{-M/2}}),$ for $p_0(\xi)\equiv1$ and some polynomials $p_\nu(\xi)$, where $\xi\in C_x $ and $x\in[-A,A]$.

 Since  $S_N^M(\xi)$ equals to $S_N^0(\xi)-  \Phi_m(\xi)+ o(N^{-m})$ we get that $|S_N^M(\xi)|\leq  o(N^{-M/2})$ and using the Lagrange Theorem we obtain that  $\big|e^{S_N^M(\xi)}-1\big| = o(N^{-M/2}),$ for $\xi\in C_x$ and $x\in[-A,A].$
\end{proof}

\fontsize{9}{9}
\selectfont

\emph{Remark $2$.} It is possible to obtain estimate $|e^{\Phi^M(\xi)} -\sum\limits_{\nu=0}^{M}\frac{\tilde{q}_{\nu}(\xi)}{N^{\nu/2}}| = o(N^{-M/2})$ without usage of Theorem 1 but directly from the analysis of the function $S_N^0$ at least for small values of $M$. For example, we can obtain the following asymptotic expansions, $N\rightarrow\infty$:

\[\begin{gathered}
r\Big(\frac{\hat{x}}{N}\Big) = -\frac{\sqrt{2}}{3}\,{\frac {\left( 2
\,p-1 \right) }{ \sqrt { \left( 1 -p\right) p} }}
\frac{ {x}^{3}}{{N}^{3/2}} +O \left( {
N}^{-2} \right), \quad
 D\Big(\frac{\hat{x}}{N}\Big)=\frac {\sqrt {2}\left( 2p-1 \right) }{ \sqrt {
 \left( 1-p \right) p}}\frac{x}{{N}^{1/2}}
+O \left( {N}^{-1} \right).
\end{gathered}\]
For the function $F_m$ with $m=2$ we have:
$\begin{gathered}F_2( z)= 1/ \left( 12 z\right). \end{gathered}$
Analogously we obtain expansions:
\[\begin{gathered}F_2( \hat{x})={\frac {1}{12Np}}+O \left( {N}^{-3/2}
 \right)
, \quad F_2( N-\hat{x})=\frac {1}{ 12\left( 1-p \right) N}+O \left( {N}^{-3/2}
 \right),
\end{gathered}\]
\[\begin{gathered}F_2( N)=1/12\,{N}^{-1}+O(N^{-3/2}).
\end{gathered}\]
Therefore we can write the first term of the asymptotic expansion of the function $e^{\Phi_M(x)}$ taking only the initial terms of the above expressions:
\[e^{\Phi_M(x)} = \frac{(1-2p)}{2^{3/2}(pq)^{1/2}}\frac{8x^3-12x}{3!\sqrt{N}}+O\Big(\frac{1}{N}\Big)=\frac{\tilde{q}_{1}(x)}{N^{1/2}}+O\Big(\frac{1}{N}\Big).\] This gives an explicit proof of Lemma 1 for $M=1$. It is an interesting question whether there is any  short and explicit proof of Lemma 1 for all values of parameter $M$ that does not use Theorem~1?

\fontsize{11}{11}
\selectfont

\begin{Lemma} Let $M$ be a nonnegative integer and $A$ be a positive real. Then for $x\in[-A,A]$ and $N$ large enough  we have the uniform estimate:
\[\frac{s!}{2\pi i} \int\limits_{C_x}\Big|\frac{\phi^M(\xi)}{(x-\xi)^{s+1}}\Big||d\xi|= O(1).\]
\end{Lemma}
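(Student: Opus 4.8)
The plan is to exploit the very specific choice of contour. Since $C_x$ is the unit circle centred at $x$, every point $\xi\in C_x$ satisfies $|x-\xi|=1$, so the denominator $(x-\xi)^{s+1}$ has modulus $1$ identically on the contour and the quantity to be estimated collapses to $\frac{s!}{2\pi}\int_{C_x}|\phi^M(\xi)|\,|d\xi|$. It then suffices to bound $|\phi^M(\xi)|$ by a single constant that is uniform both in $N$ (for $N$ large) and in $x\in[-A,A]$, and to multiply by the arclength $2\pi$ of $C_x$.

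First I would observe that as $x$ ranges over $[-A,A]$ the contours $C_x$ all lie in the fixed compact strip $R=\{\xi=u+iv : |u|\le A+1,\ |v|\le1\}$; consequently uniformity in $x$ follows automatically once I produce a bound valid on all of $R$. Next I would treat the two factors of $\phi^M(\xi)=\frac{1}{\sqrt{2\pi}\sigma}e^{-\xi^2}\sum_{\nu=0}^{M}N^{-\nu/2}\tilde q_\nu(\xi)$ separately. For the Gaussian factor, writing $\xi=u+iv$ gives $\mathrm{Re}(-\xi^2)=v^2-u^2\le v^2\le 1$ on $R$, whence $|e^{-\xi^2}|=e^{\mathrm{Re}(-\xi^2)}\le e$ throughout $R$; the point is that the imaginary part of $\xi$ is confined to $[-1,1]$, so the Gaussian cannot blow up. For the polynomial factor, the $\tilde q_\nu$ are fixed polynomials independent of $N$, so each $|\tilde q_\nu(\xi)|$ is bounded by a constant on the compact set $R$; since $N^{-\nu/2}\le 1$ for $N\ge1$, the full sum is dominated on $R$ by $\sum_{\nu=0}^{M}\max_{\xi\in R}|\tilde q_\nu(\xi)|$, a constant independent of $N$.

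Combining the two bounds yields $|\phi^M(\xi)|\le C$ on $R$ with $C$ independent of both $N$ and $x$. To conclude I would then estimate the integral trivially: on $C_x$ the integrand $|\phi^M(\xi)|$ is at most $C$, the length of $C_x$ is $2\pi$, and $|x-\xi|\equiv 1$, so the expression is at most $s!\,C=O(1)$, uniformly in $x\in[-A,A]$ and for all sufficiently large $N$.

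As for the main difficulty, there is essentially none of substance. The decisive simplification is the choice of a unit circle, which makes $|x-\xi|\equiv 1$ and thereby removes any singularity from the integrand, so no delicate local estimate near $\xi=x$ is ever needed. The only point requiring genuine care is uniformity in $x$, and that is dispatched by the remark that $\bigcup_{x\in[-A,A]}C_x\subseteq R$ sits inside a single fixed compact strip on which the Gaussian factor stays bounded.
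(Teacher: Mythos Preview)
Your argument is correct and follows essentially the same route as the paper: both use $|x-\xi|\equiv 1$ on the unit circle to eliminate the denominator, bound the polynomial factor $\sum_{\nu=0}^{M}N^{-\nu/2}\tilde q_\nu(\xi)$ uniformly on the union of contours, and bound the Gaussian factor separately. The only minor difference is that the paper obtains the sharper estimate $Ce^{-x^2/4}$ (with genuine decay in $x$) for the Gaussian integral, whereas you simply note $|e^{-\xi^2}|\le e$ on the strip; since the lemma only asserts $O(1)$, your cruder bound is entirely sufficient.
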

\begin{proof}
For a given value of $M$, all $x\in[-A,A]$   and $N=N(A,M)$  large enough we have
$|\sum\limits_{\nu=0}^{M}\frac{\tilde{q}_{\nu}(x+e^{i\varphi})}{N^{\nu/2}}| \leq 2$.

\[\int\limits_{C_x}\Big|\frac{\phi^M(\xi)}{(x-\xi)^{s+1}}\Big||d\xi| \leq2\int\limits_{0}^{2\pi}e^{-(x+2\cos(\varphi))^2/2} d\varphi<Ce^{-x^2/4}\]
and constant $C$ does not depend on $x$.
\end{proof}

Combining Lemma 1 and Lemma 2 we can estimate the second integral
\[\Big|\frac{s!}{2\pi i} \int\limits_{C_x}\frac{\phi^M(\xi)\big(e^{S_N^M(\xi)}-1\big)d\xi}{(x-\xi)^{s+1}} \Big|= o(N^{-M/2}), \]
for $\xi\in C_x, $ $x\in[-A,A],$ that finishes the proof.


\begin{thebibliography}{10}

\bibitem{DaiWong}  D~Dai, R~Wong, \emph{Global asymptotics of Krawtchouk polynomials -- a Riemann-Hilbert approach}, Chinese Annals of Mathematics, Series B,  \textbf{28}:1 (2007), 1--34.



\bibitem{Krawtchouk1929} M.~Kravchuk,  \emph{Sur une g\'en\'eralisation des polynomes d'Hermite}, C. R. Acad. Sci. Paris, S\'er. Math., 189 (1929), 620--622.

\bibitem{LodkinMinabutdinov} A.~A.~Lodkin, A.~R.~Minabutdinov,\ \emph{The limiting curves for the Pascal adic transformation}, in preparation.

\bibitem{NikiforovSuslovUvarov}  A.~F.~Nikiforov, S.~K.~Suslov, V.~B.~Uvarov, \emph{Classical Orthogonal Polynomials of a Discrete Variable}, Springer Series in Computational Physics, Berlin: Springer-Verlag, 1991.

\bibitem{Petrov} V.~V.~Petrov, \emph{Sums of Independent Random Variables},   Springer, 1975.



\bibitem{Sharapudinov} I.~I.~Sharapudinov, \emph{Asymptotic properties of Krawtchouk polynomials},
Math. Notes,  \textbf{44}:5  (1988), 855--862.

\bibitem{Szego}
G.~Szeg\"o, \emph{Orthogonal Polynomials}, 4th ed., Amer. Math. Soc.,  1975.


\bibitem{Fedoruk} M.~V.~Fedoryuk,
\emph{Metod perevala}. (Russian) [The saddle-point method], Nauka, Moscow, 1977.


\bibitem{AbramovitsStigan} M.~Abramovits and I.~Stigan, \emph{Handbook on Special Functions} [in Russian], Nauka, Moscow, 1979.
\end{thebibliography}
\end{document}